\theoremstyle{plain}
\newtheorem*{theorem*}{Theorem}
\newtheorem*{lemma*} {Lemma}
\newtheorem*{corollary*} {Corollary}
\newtheorem*{proposition*}{Proposition}
\newtheorem*{conjecture*}{Conjecture}
\newtheorem{theorem}{Theorem}[section]
\newtheorem{lemma}[theorem]{Lemma}
\newtheorem*{theorem1*}{Theorem 1}
\newtheorem*{theorem2*}{Theorem 2}
\newtheorem*{theorem3*}{Theorem 3}
\newtheorem{corollary}[theorem]{Corollary}
\newtheorem{proposition}[theorem]{Proposition}
\theoremstyle{remark}
\newtheorem*{remark}{Remark}
\newtheorem*{remarks}{Remarks}
\newtheorem{example*}{Example}
\newtheorem*{ack}{Acknowledgement}
\theoremstyle{definition}
   \def\Z{\Bbb{Z}}  
    \def\bp{\begin{pmatrix}}
 \def\ep{\end{pmatrix}} \def\bn{\begin{enumerate}} 
   \def\en{\end{enumerate}}
\def\ba{\begin{array}} \def\ea{\end{array}}
\def\be{\begin{equation}} \def\ee{\end{equation}}
\begin{document}
\title{A note on the fundamental group of Kodaira fibrations}

\author{Stefano Vidussi}
\address{Department of Mathematics, University of California,
Riverside, CA 92521, USA} \email{svidussi@ucr.edu}
\date{\today}
\subjclass[2010]{14J29, 57R17}

\begin{abstract} The fundamental group $\pi$  of a Kodaira fibration is, by definition, the extension of a surface group $\Pi_b$ by another surface group $\Pi_g$, i.e. \[ 1 \rightarrow \Pi_g  \rightarrow \pi \rightarrow \Pi_b \rightarrow 1. \] Conversely, in \cite[Question 16] {Ca17} the author inquires about what conditions need to be satisfied by a group of that sort in order to be the fundamental group of a Kodaira fibration. In this short note we collect some restriction on the image of the classifying map $m \colon \Pi_b \to \Gamma_g$ in terms of the coinvariant homology of $\Pi_g$. In particular, we observe that if $\pi$ is  the fundamental group of a Kodaira fibration with \textit{relative irregularity} $g-s$, then $g \leq 1+ 6s$, and we show that this effectively constrains the possible choices for $\pi$, namely that there are group extensions as above that fail to satisfy this bound, hence cannot be the fundamental group of a Kodaira fibration. A noteworthy consequence of this construction is that it provides examples of symplectic $4$--manifolds that fail to admit a K\"ahler structure for reasons that eschew the usual obstructions.

\end{abstract}

\maketitle

In this note we use the term \textit{Kodaira fibration} to refer to an algebraic surface $X$ endowed with the structure of smooth fiber bundle with bundle map $f \colon X \to \Sigma_b$ over a Riemann surface $\Sigma_{b}$, with fiber a Riemann surface $\Sigma_{g}$ so that the bundle map  is not isotrivial as a fibration (in the sense of algebraic geometry, i.e. as a pencil of curves); in particular $f$ is not  a locally trivial \textit{complex analytic} fiber bundle. It is well--known that the base and fiber genera satisfy the conditions $b \geq 2$ and $g \geq 3$, see e.g. \cite[Section V.14]{BHPV04}; moreover, by Arakelov's inequality, the signature of the intersection form of $X$ is strictly positive, see e.g. \cite[Corollary 42]{Ca17}. 

From the definition we deduce that $X$ is aspherical and, denoting $\pi := \pi_1(X)$ the fundamental group of $X$, we have a short exact sequence \begin{equation} \label{eq:ses}  1 \rightarrow \Pi_g  \rightarrow \pi \rightarrow \Pi_b \rightarrow 1 \end{equation}  where $\Pi_h$  is the fundamental group of a Riemann surface $\Sigma_h$. Namely, $\pi$ is the extension of a surface group by a surface group. In \cite[Question 16] {Ca17} the author inquires about what conditions need to be satisfied by a group of that sort in order to be the fundamental group of a Kodaira fibration, in terms of the \textit{monodromy representation} $m \colon \Pi_b \to \Gamma_g$, where $\Gamma_g$ is the genus--$g$ mapping class group, that classifies the bundle $f \colon X \to \Sigma_b$. That was not the very first time that a similar question has been raised; previous instances of this include MathOverflow posts by I. Rivin and J. Bryan, implicitly \cite{Ba12} (prompted by Bryan's post), \cite{Hi15}, and older ones, possibly starting with \cite{Wa79,Jo86}. We make  reference to \cite{Ca17} as it is there that the question was framed in the form we discuss here.
Further results on this question appeared recently  in  \cite{Ar17} and \cite{Fl17}.

As is well known, by Parshin Rigidity Theorem, for any fixed pair $(g,b)$ there can exist only finitely many extensions as in (\ref{eq:ses}) that arise as fundamental group of some Kodaira fibration, so \textit{a priori} ``most" choices of $m$ fail that condition. It is not clear, however, how to spell out explicit obstructions.  

The question of \cite{Ca17}  is asked under the obvious assumption that the image of $m$ is infinite, so that the signature does not  automatically vanish. 
To that, we can add the assumption that the first Betti number of $\pi$ (and all its finite index subgroups) must be even, lest $\pi$ fails to be K\"ahler. This further restricts, although it does not decide, possible choices for a group $\pi$ as in (\ref{eq:ses}); examples of this appear in \cite{Jo86,Ba12}.

The purpose of this note is to show how subtler restrictions follow from Xiao's constraints (\cite{Xi87}) on the relative irregularity of holomorphic fibrations. 

To start, assume that $X$ is an aspherical surface with fundamental group $\pi$ as in (\ref{eq:ses}). As discussed by Kotschick and Hillman in \cite[Proposition 1]{Ko99} and \cite[Theorem 1]{Hi00} respectively, the existence of an epimorphism from $\pi$  onto an hyperbolic surface group $\Pi_b$ with a finitely generated kernel entails the existence of a holomorphic fibration $f : X \to \Sigma_{b}$ whose fibers are smooth and of multiplicity one. Namely  $f : X \to \Sigma_{b}$ is a smooth surface bundle over a surface. By the long exact sequence of the fibration, the fiber must have genus equal to $g$. 

The first, trivial, observation is that the condition on the nonvanishing of the signature implies that the image of $m$ cannot be contained in subgroups of $\Gamma_g$ for which the signature of the corresponding surface bundle is zero; this entail the well--known facts that the image of $m$ cannot be contained in the hyperelliptic subgroup, or in the Torelli subgroup, of the mapping class group. 

We pause to give a different proof of the latter statement, that positions us  in the spirit of this note.

\begin{lemma} \label{lemma:tor} (Folklore) Let $X$ be an aspherical K\"ahler surface with fundamental group $\pi := \pi_{1}(X)$ as in (\ref{eq:ses}). Then if the image of the monodromy representation $m \colon \Pi_b \to \Gamma_g$ is contained in the Torelli subgroup, then it is trivial and $X = \Sigma_{g} \times \Sigma_{b}$. \end{lemma}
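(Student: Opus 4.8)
The plan is to exploit the holomorphic fibration $f\colon X\to\Sigma_b$ with smooth fibres of genus $g$ provided, as recalled above, by the theorem of Kotschick and Hillman; its monodromy representation is precisely $m$. The first step is to reinterpret the hypothesis: to say that $\im m$ lies in the Torelli group is to say that $\Pi_b$ acts trivially on $H_1(\Pi_g;\Z)=H_1(\Sigma_g;\Z)$, equivalently that the weight--one variation of Hodge structure $R^1 f_\ast\Q$ on $\Sigma_b$ has trivial monodromy. In terms of the (co)invariant homology that is the theme of this note, the invariant part of that variation is then all of $H^1(\Sigma_g)$, so the fixed part of the Hodge bundle is everything and the relative irregularity of $f$ is maximal: $q_f=q(X)-b=g$; concretely, $f_\ast\omega_{X/\Sigma_b}$ is the trivial rank--$g$ bundle and $b_1(X)=2b+2g$.

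The second step is to deduce that $f$ is isotrivial; this is where K\"ahlerness enters and where the statement has genuine content. In the spirit of this note one can invoke Xiao's inequality \cite{Xi87}: a non-isotrivial fibred surface with fibre genus $g\ge 2$ satisfies $q_f\le\tfrac{5g+1}{6}<g$, so $q_f=g$ forces isotriviality. Alternatively, and more self-containedly, triviality of the monodromy of $R^1 f_\ast\C$ makes the period map a holomorphic map $\Sigma_b\to\mathfrak H_g$ from a compact complex manifold into a bounded symmetric domain, hence a constant; the Hodge filtration $F^1$ is then a constant subbundle, the polarized abelian scheme $\underline{\mathrm{Jac}}(X/\Sigma_b)$ is the constant family $\mathrm{Jac}(\Sigma_g)\times\Sigma_b$, and by the Torelli theorem $f\colon X\to\Sigma_b$ is an isotrivial family of curves.

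An isotrivial family of genus--$g$ curves over $\Sigma_b$ has the form $(\widetilde{\Sigma_b}\times\Sigma_g)/G$ with $G\le\Aut(\Sigma_g)$ a finite group, so its monodromy $m\colon\Pi_b\to\Gamma_g$ has finite image. But that image lies, by assumption, in the Torelli group, which is torsion--free --- it is contained in the kernel of $\Gamma_g\to\mathrm{Sp}(2g,\Z/3)$, torsion--free by Serre's lemma. Hence $\im m$ is trivial, i.e.\ $G=1$ and $X\cong\Sigma_g\times\Sigma_b$. (Equivalently, on the level of groups the extension (\ref{eq:ses}) now has inner conjugation action and extension class in $H^2(\Pi_b;Z(\Pi_g))=H^2(\Pi_b;0)=0$, so $\pi\cong\Pi_g\times\Pi_b$, compatibly with $X$ being aspherical.)

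The one genuinely non-formal ingredient, and the step I expect to be the main obstacle to write out carefully, is the passage from ``$\im m\subseteq$ Torelli'' to ``$f$ isotrivial'': everything around it --- torsion--freeness of the Torelli group, vanishing of the extension class, identification of the aspherical model with the product --- is routine, whereas this step is a theorem resting on Hodge theory (or on Xiao's bound). A minor point to handle with care is that the Torelli morphism $\mathcal M_g\to\mathcal A_g$ is not a closed immersion of stacks, so ``constant family of polarized Jacobians'' should be translated into ``isotrivial family of curves'' with a word of justification; this does not affect the conclusion.
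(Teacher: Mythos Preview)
Your proof is correct. Both your argument and the paper's begin by reading the Torelli hypothesis as $q_f=g$ via the five--term sequence. The paper then invokes Beauville's lemma \cite{Be82} directly---$q_f=g$ forces $X$ to be birational to $\Sigma_g\times\Sigma_b$---and concludes from asphericity that $X$ is actually the product. You instead proceed in two further steps: first deduce isotriviality (either by Xiao's bound $q_f\le\frac{5g+1}{6}$ for non--isotrivial fibrations, or by the self--contained observation that a holomorphic map from the compact curve $\Sigma_b$ to the bounded domain $\mathfrak{H}_g$ is constant, so the period map and hence the moduli map is constant), and then use that an isotrivial smooth family has monodromy landing in the finite group $\Aut(\Sigma_g)\hookrightarrow\Gamma_g$, which must be trivial once it sits in the torsion--free Torelli group. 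Your route is longer but more transparent about the mechanism, and your option (b) avoids citing any black--box inequality; the paper's route is shorter and is deliberately written so as to set up the template reused in the very next proposition, where Beauville's endpoint case $q_f=g$ is replaced by Xiao's inequality.
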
  

\begin{proof} The  Hochschild--Serre spectral sequence associated to the sequence in (\ref{eq:ses}) gives, in low degrees, the long exact sequence 
\begin{equation} \label{eq:hs} H_2(X;\Z) \stackrel{f_*}{\rightarrow} H_2(\Sigma_b;\Z) \rightarrow H_1(\Sigma_g;\Z)_{\Pi_b} \rightarrow H_1(X;\Z) \stackrel{f_*}{\rightarrow} H_1(\Sigma_b;\Z) \rightarrow 0, \end{equation}  where $H_1(\Sigma_g;\Z)_{\Pi_b}$ is the group of coinvariants under the action of $\Pi_b$ on the homology of the fiber, i.e. the \textit{homological} monodromy.
It is well--known that $f_* \colon H_2(X;\Z) \rightarrow H_2(\Sigma_b;\Z)$ is an epimorphism over rational coefficients. It follows that the \textit{relative irregularity} $q_f =  q(X) - q(\Sigma_b)$  is given by the half of the rank of the group of coinvariants (which must be even for $X$ to have even first Betti number), i.e. \[ q_f =  \frac{1}{2}\textit{rk}_{\Z}H_1(\Sigma_g;\Z)_{\Pi_b}.\] As $m(\Pi_b)$ is contained in the Torelli group the action of $\Pi_b$ on $H_1(\Sigma_g;\Z)$ is trivial, hence $q_f = g$. By a result of Beauville (see \cite[Lemme]{Be82}), this can occurs if and only if $X$ is birational to the product $\Sigma_g \times \Sigma_h$ with the fibration corresponding to the projection; as we assume that $X$ is aspherical, this implies that $X$ is actually isomorphic to  $\Sigma_g \times \Sigma_h$. \end{proof}

As a Kodaira fibration is aspherical and non--isotrivial it cannot be isomorphic to a product. The Lemma implies therefore that the image of $m$  cannot be contained in the Torelli subgroup.

The take--home point of the previous proof is that we can use constraints on the relative irregularity of fibered surfaces  (as we used Beauville's), to control the coinvariants of the homological monodromy. This leads to the following.  

\begin{proposition} \label{proposition:xi} Let $X$ be an aspherical K\"ahler surface with fundamental group $\pi := \pi_{1}(X)$ as in (\ref{eq:ses}). Then if the coinvariant homology  $H_1(\Sigma_g;\Z)_{\Pi_b}$ has rank $2g-2s$, then $g \leq 1 + 6s$.  \end{proposition}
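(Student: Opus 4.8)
The plan is to reduce the statement to Xiao's inequality for the relative irregularity of a fibered surface; granting that inequality, the proof is a one-line computation on top of the set-up of Lemma~\ref{lemma:tor}.

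First I would make the fibration explicit and read off its relative irregularity. By the results of Kotschick and Hillman recalled above (\cite[Proposition 1]{Ko99}, \cite[Theorem 1]{Hi00}), the epimorphism $\pi\to\Pi_b$ of (\ref{eq:ses}), having finitely generated kernel and hyperbolic surface-group image, is induced by a holomorphic fibration $f\colon X\to\Sigma_b$ with smooth fibers of multiplicity one and fiber genus $g$; in particular $f$ is relatively minimal. Exactly as in the proof of Lemma~\ref{lemma:tor}, the rational surjectivity of $f_*\colon H_2(X;\Q)\to H_2(\Sigma_b;\Q)$ collapses the Hochschild--Serre sequence (\ref{eq:hs}) to $0\to H_1(\Sigma_g;\Q)_{\Pi_b}\to H_1(X;\Q)\to H_1(\Sigma_b;\Q)\to 0$, so that
\[ q_f\;=\;q(X)-q(\Sigma_b)\;=\;\tfrac12\,\mathrm{rk}_{\Z}H_1(\Sigma_g;\Z)_{\Pi_b}\;=\;g-s . \]

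The engine is Xiao's bound (\cite{Xi87}): for a relatively minimal, non-isotrivial fibration of genus $g$ over a smooth curve one has $q_f\leq\tfrac{5g+1}{6}$. Our $f$ is relatively minimal, and it is non-isotrivial under the standing assumption that $m(\Pi_b)$ be infinite, since an isotrivial family of genus-$g$ curves ($g\geq 2$) has finite monodromy in $\Gamma_g$. Combining Xiao's bound with the displayed identity then gives at once
\[ g-s\;\leq\;\frac{5g+1}{6}\quad\Longrightarrow\quad 6g-6s\leq 5g+1\quad\Longrightarrow\quad g\leq 1+6s . \]
For completeness one should check that isotriviality causes no trouble beyond the trivial product: if $f$ is isotrivial and $X\neq\Sigma_g\times\Sigma_b$, then $X$ is a free quotient $(\Sigma_g\times\widetilde{\Sigma}_b)/G$ with $|G|\geq 2$ acting faithfully by automorphisms of the fiber, so $q_f$ equals the genus of $\Sigma_g/G$ and Riemann--Hurwitz forces the even stronger bound $q_f\leq\tfrac{g+1}{2}\leq\tfrac{5g+1}{6}$; the remaining case $X=\Sigma_g\times\Sigma_b$ has $s=0$ and is precisely the one genuinely excluded by the infiniteness of $m$.

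The whole argument is short once Xiao's theorem is at hand; the only point requiring care is verifying its hypotheses. Relative minimality is automatic from the smoothness of the fibers, so the real issue is non-isotriviality — that is, ruling out that $X$ is (birationally) a product — which is exactly where the standing assumption on the monodromy, or the defining non-isotriviality of a Kodaira fibration, comes in. I expect this small case distinction, rather than the final inequality, to be the only delicate point.
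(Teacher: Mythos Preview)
Your argument is correct and is essentially the paper's own proof: obtain the smooth holomorphic fibration via Kotschick--Hillman, read $q_f=g-s$ from the Hochschild--Serre sequence as in Lemma~\ref{lemma:tor}, and apply Xiao's bound $q_f\le(5g+1)/6$ to conclude. The only cosmetic difference is that the paper justifies applicability of Xiao by noting that the homological action is nontrivial (so the fibration is not a product), whereas you invoke the standing infinite-monodromy hypothesis and additionally dispose of the isotrivial non-product case by hand; this extra care is harmless and, if anything, makes the verification of Xiao's hypotheses a touch more explicit than in the paper.
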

\begin{proof} Following \textit{verbatim} the proof of Lemma \ref{lemma:tor}, we have a holomorphic fibration $f \colon X \to \Sigma_{b}$ that is a smooth surface bundle over a surface,  whose low--degree homology is described in (\ref{eq:hs}). As the action of $\Pi_b$ on $H_1(\Sigma_g;\Z)$ is nontrivial, the fibration is necessarily nontrivial, hence by  \cite[Corollary 3]{Xi87} its relative irregularity satisfies the bound $q_f \leq \frac{5g+1}{6}$. By our assumption again we have $q_f = g-s$, whence the bound follows.
\end{proof}
\begin{corollary} A Kodaira fibration of  relative irregularity $q_f = g-1$ has fiber of genus $g \leq 7$. \end{corollary}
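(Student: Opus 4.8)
The plan is to obtain this as the special case $s=1$ of Proposition \ref{proposition:xi}. First I would record that a Kodaira fibration $f \colon X \to \Sigma_b$ is, by the conventions fixed at the start of this note, an aspherical projective (in particular K\"ahler) surface whose fundamental group $\pi = \pi_1(X)$ sits in an extension of the shape (\ref{eq:ses}), so that the hypotheses of Proposition \ref{proposition:xi} are satisfied. I would also note, for completeness, that since a Kodaira fibration is non--isotrivial it is not isomorphic to a product $\Sigma_g \times \Sigma_b$; by Lemma \ref{lemma:tor} this forces the image of the monodromy representation $m \colon \Pi_b \to \Gamma_g$ out of the Torelli subgroup, so the action of $\Pi_b$ on $H_1(\Sigma_g;\Z)$ is nontrivial --- exactly the nondegeneracy needed to apply Xiao's inequality inside the proof of the Proposition.

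The second step is to translate the hypothesis $q_f = g-1$ into the rank condition appearing in Proposition \ref{proposition:xi}. Running the computation from the proof of Lemma \ref{lemma:tor} verbatim --- using the surjectivity of $f_* \colon H_2(X;\Q) \to H_2(\Sigma_b;\Q)$ and the exact sequence (\ref{eq:hs}) --- one has $q_f = \tfrac{1}{2}\,\mathrm{rk}_{\Z} H_1(\Sigma_g;\Z)_{\Pi_b}$. Hence the assumption $q_f = g-1$ is equivalent to $H_1(\Sigma_g;\Z)_{\Pi_b}$ having rank $2g-2 = 2g - 2\cdot 1$, i.e. it is of the form $2g-2s$ with $s=1$.

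Finally I would feed this directly into Proposition \ref{proposition:xi}, which with $s=1$ yields $g \leq 1 + 6\cdot 1 = 7$. I do not anticipate any genuine obstacle: the corollary is precisely the $s=1$ instance of the Proposition, and the only thing that requires a word of justification is that the auxiliary hypotheses (asphericity, K\"ahler structure, nontriviality of the homological monodromy) hold automatically for an honest Kodaira fibration, which is what the first paragraph takes care of.
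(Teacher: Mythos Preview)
Your proposal is correct and matches the paper's intended argument: the corollary is stated without proof precisely because it is the case $s=1$ of Proposition \ref{proposition:xi}, using $q_f = \tfrac12\,\mathrm{rk}_{\Z}H_1(\Sigma_g;\Z)_{\Pi_b}$ from the proof of Lemma \ref{lemma:tor}. Your extra paragraph verifying nontriviality of the homological monodromy is not strictly needed (it is already forced by $s=1>0$, since full-rank coinvariants would mean $s=0$), but it does no harm.
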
 
It has been conjectured (see e.g. \cite{BG--AN18}) that Xiao's bound on then relative irregularity could be improved to $q_f \leq \frac{g}{2} + 1$ (the \textit{Modified Xiao's conjecture};  \cite{BG--AN18} makes some steps in that direction using the Clifford index of a fibration). It that were the case, we would get that the fiber genus is $4$ at most (and similar improvements would occur to Proposition \ref{proposition:xi}). As Kodaira fibrations have fibers of genus $g$ at least $3$, this would leave an even narrower room for the genera of possible fibers. In fact
with extra assumptions on the genus of the base, we can actually improve that result: in fact, as observed in \cite{Ca17}, using standard inequalities for nonisotrivial smooth fibrations (see \cite{Be82,Liu96}) it is not hard to prove that the holomorphic Euler characteristic of a Kodaira fibration satisfies the inequalities \begin{equation} \label{eq:bounds} 3(b-1)(g-1) < 3 \chi(X) < 4(b-1)(g-1) \end{equation}
so when $b=2$ there are no Kodaira fibrations of genus $3$ or $4$ (regardless of the relative irregularity). We can then rephrase the above results: 
\begin{corollary} \label{cor:conditional} If the Modified Xiao's conjecture holds true, a Kodaira fibration with fiber of genus $g$ and base of genus $b = 2$ has relative irregularity $q_f \leq g - 2$. \end{corollary}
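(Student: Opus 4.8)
The plan is to derive the bound by assembling facts already recorded above: the chain of inequalities (\ref{eq:bounds}) for the holomorphic Euler characteristic, which pins down the fiber genus once $b=2$, the conjectural Modified Xiao bound $q_f \le \frac{g}{2}+1$, and the elementary observation that $q_f$ is a non-negative \emph{integer} (recall that $q_f = \frac12\operatorname{rk}_{\Z}H_1(\Sigma_g;\Z)_{\Pi_b}$, as in the proof of Lemma \ref{lemma:tor}).

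First I would specialize (\ref{eq:bounds}) to $b=2$, which gives $g-1 < \chi(X) < \frac{4}{3}(g-1)$. Since $\chi(X)$ is a positive integer, the open interval $\left(g-1,\,\tfrac{4}{3}(g-1)\right)$ must contain one; for $g=3$ this interval is $\left(2,\tfrac{8}{3}\right)$ and for $g=4$ it is $(3,4)$, and neither contains an integer. Hence any Kodaira fibration over a base of genus $2$ satisfies $g\ge 5$ — this is exactly the remark, already made, that there are no Kodaira fibrations of genus $3$ or $4$ when $b=2$.

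Then, under the Modified Xiao conjecture, I would invoke $q_f\le \tfrac{g}{2}+1$ and combine it with integrality: for every integer $g\ge 5$ one has $\left\lfloor \tfrac{g}{2}+1\right\rfloor \le g-2$. The only case in which the raw bound $\tfrac{g}{2}+1$ itself exceeds $g-2$ is $g=5$, where $q_f\le \tfrac{7}{2}$ together with $q_f\in\Z$ forces $q_f\le 3 = g-2$; for $g\ge 6$ one already has $\tfrac{g}{2}+1\le g-2$. Therefore $q_f\le g-2$ in all cases. An equivalent way to organize the endgame is to note that $q_f\le g-1$ holds unconditionally (Xiao's original inequality $q_f\le \tfrac{5g+1}{6} < g$ for $g\ge 2$) and then to exclude equality: $q_f = g-1$ together with $q_f\le \tfrac{g}{2}+1$ would force $g\le 4$, contradicting $g\ge 5$.

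There is no substantive obstacle; the statement is a bookkeeping consequence of the cited bounds. The single point deserving care is the integrality step at $g=5$: the half-unit of slack in the Modified Xiao bound is absorbed precisely because $q_f$ is an integer, and it is the input (\ref{eq:bounds}) — not the conjecture — that eliminates the small genera $g=3,4$, where that slack would otherwise be decisive rather than harmless.
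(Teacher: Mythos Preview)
Your proposal is correct and follows the same route as the paper: use (\ref{eq:bounds}) at $b=2$ to force $g\ge 5$, and then combine this with the conjectural bound $q_f\le \tfrac{g}{2}+1$ to get $q_f\le g-2$. Your ``alternative'' phrasing (exclude $q_f=g-1$ because it would force $g\le 4$) is precisely how the paper arrives at the corollary; your main phrasing via integrality of $q_f$ is just a minor reorganization of the same endgame.
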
 

\begin{remark}
Without entering into much detail, we note that the inequalities in (\ref{eq:bounds}), or other standard inequalities for surfaces, don't seem to provide any further information for the case of $b=2$ beyond those observed above; for example, when $g=5$, we cannot exclude the existence of a Kodaira fibration with $\chi(X) = 5$ which may have $q_f = 4$ (although it is worth noticing that such surface would have $K_X^2 = 44$, hence slope $K_X^2/e(X) = 2.75$, higher than any known examples).  In that sense, it seems necessary to invoke the Modified Xiao's conjecture in the statement of Corollary \ref{cor:conditional}. Similarly, if $b=3$, for $g=3$ we are aware of no constraints to the existence of a Kodaira fibration with $q_f = 2$ that would have again $\chi(X) = 5$,  $K_X^2 = 44$.  The information of  (\ref{eq:bounds}) gets further diluted as $b,g$ grow. 
 \end{remark}

To complete the picture, we need to show that Proposition \ref{proposition:xi} is effective, namely that there do exist surface bundles over a surface that violate the constraints on the fiber genus therein contained. As previously observed, this entails that there exist actual groups $\pi$ as in (\ref{eq:ses}) with  coinvariant homology of rank $2g-2s$ that cannot be the fundamental groups of a Kodaira surface. 

We will provide examples for some choice of the value $s$; it is not too hard, with minor modifications of the constructions described below, to extend the class of such $s$: we leave this task to the interested reader. For good measure, the surface bundles we will describe have even Betti numbers and strictly positive signature, so there aren't (to the best of our understanding) simpler obstructions to the existence of a Kodaira fibration.

Instead of reinventing the wheel, our construction will follow the template of \cite{Ba12} (to which we refer the reader), and exploit as building block a remarkable surface bundle over a surface discovered in \cite{EKKOS02}. This manifold, that will be denoted as $Z$, has fibers of genus $3$, base of genus $9$, strictly positive signature $\sigma(Z) = 4$, and a section with self--intersection $0$. 

\begin{proposition} \label{prop:essezero} There exists an integer $s_0 \in [1,3]$ such that for every $b \geq 9$ there exists a surface bundle $Z_{g,b}$ with fiber of genus $g$ over a surface of genus $b$, of strictly positive signature, whose coinvariant homology $H_1(\Sigma_g;\Z)_{\Pi_b}$ has rank $2g-2s_0$ and $g > 1+ 6s_0$.  Its fundamental group cannot occur as fundamental group of a Kodaira fibration. \end{proposition}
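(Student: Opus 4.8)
The plan is to build these bundles by the fiber--sum template of \cite{Ba12}, starting from the EKKOS surface bundle $Z\to\Sigma_9$ of \cite{EKKOS02} --- fiber genus $3$, signature $\sigma(Z)=4$, equipped with a section of self--intersection $0$ --- and enlarging in turn the fiber and then the base by fiber--summing with trivial bundles. The key mechanism is that summing with a trivial $\Sigma_h$--factor along a section adds a rank--$2h$ trivial summand to the homological monodromy, so the coinvariant homology gains exactly $2h$ in rank while the defect $s$ controlling Proposition \ref{proposition:xi} is unchanged; raising $h$ then drives the fiber genus past the bound there. To pin down that defect: since $\sigma(Z)\ne0$ the homological monodromy $\Pi_9\to\mathrm{Sp}(6,\Z)$ is nontrivial, so $\mathrm{rk}_{\Z}H_1(\Sigma_3;\Z)_{\Pi_9}\le 5$; from the explicit description of the monodromy of $Z$ in \cite{EKKOS02} one checks this rank is even, hence equal to $6-2s_0$ with $s_0\in\{1,2,3\}$, and we fix this $s_0$.

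The trivial bundle $\Sigma_9\times\Sigma_h\to\Sigma_9$ also carries a section of self--intersection $0$; deleting the $\Sigma_9\times D^2$--neighborhoods of the two sections and regluing fiberwise along $\Sigma_9\times S^1$ produces a genuine $\Sigma_{3+h}$--bundle $Z'\to\Sigma_9$ whose monodromy is the block sum of that of $Z$ on the $H_1(\Sigma_3)$--summand with the trivial action on the $H_1(\Sigma_h)$--summand, so $H_1(\Sigma_{3+h};\Z)_{\Pi_9}$ has rank $(6-2s_0)+2h$; Novikov additivity (the gluing is along the closed $3$--manifold $\Sigma_9\times S^1$ and the removed pieces have signature $0$) gives $\sigma(Z')=\sigma(Z)=4$. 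Fix $h=17$ and set $g:=20$, so that $g=20>19\ge 1+6s_0$. For $b=9$ put $Z_{g,b}:=Z'$, and for $b\ge 10$ let $Z_{g,b}$ be the fiber sum of $Z'$ with the trivial bundle $\Sigma_{b-9}\times\Sigma_g$ along a fiber of each: a $\Sigma_g$--bundle over $\Sigma_b$ whose added piece has trivial monodromy, so its coinvariant homology still has rank $2g-2s_0$ and $\sigma(Z_{g,b})=4>0$ (Novikov again). As $g\ne1$ the map $f_*\colon H_2(Z_{g,b};\Q)\to H_2(\Sigma_b;\Q)$ is onto (standard for surface bundles over surfaces), so by (\ref{eq:hs}) the first Betti number equals $2b+2g-2s_0$, even --- so the examples carry no elementary signature or Betti--number obstruction.

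Since $g,b\ge 1$, $Z_{g,b}$ is aspherical, so $\pi:=\pi_1(Z_{g,b})$ admits an extension as in (\ref{eq:ses}) with $H_1(\Sigma_g;\Z)_{\Pi_b}$ of rank $2g-2s_0$ and $g>1+6s_0$. If $\pi$ were the fundamental group of a Kodaira fibration $Y$, then $Y$ would be an aspherical K\"ahler surface with $\pi_1(Y)=\pi$, and Proposition \ref{proposition:xi} --- applied to this extension with $s=s_0$ (the epimorphism $\pi\to\Pi_b$ has the finitely generated kernel $\Pi_g$ and $\Pi_b$ is hyperbolic, so the hypotheses of \cite{Ko99,Hi00} used there are satisfied) --- would force $g\le 1+6s_0$, contradicting $g>1+6s_0$. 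Hence $\pi$ cannot be the fundamental group of a Kodaira fibration.

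The step I expect to be most delicate is the bookkeeping in the two fiber sums --- checking that summing with a trivial bundle along a section, respectively along a fiber, returns an honest surface bundle and that its homological monodromy is the asserted block sum, so that the coinvariant homology adds up as claimed --- together with extracting from \cite{EKKOS02} that $\mathrm{rk}_{\Z}H_1(\Sigma_3;\Z)_{\Pi_9}$ is even, which is what makes $s_0$ a genuine integer in $[1,3]$. Granting these, the argument is formal, with Proposition \ref{proposition:xi} carrying all the geometric content.
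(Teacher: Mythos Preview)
Your approach is essentially the paper's: start from the EKKOS bundle $Z\to\Sigma_9$, enlarge the fiber by a section sum with a trivial bundle, then enlarge the base by an ordinary fiber sum, and track signature and coinvariants via Novikov additivity and the block structure of the monodromy. The difference is in one step, and it is a genuine gap.

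You assert that ``from the explicit description of the monodromy of $Z$ in \cite{EKKOS02} one checks this rank is even'', and your whole argument hinges on this: if $\textit{rk}_{\Z}H_1(\Sigma_3;\Z)_{\Pi_9}$ were odd, your $s_0$ would not be an integer and your $Z_{g,b}$ would have odd $b_1$, so the examples would be disqualified by the elementary parity obstruction rather than by Proposition~\ref{proposition:xi}. But you never carry out that check, and you flag it yourself as the delicate point. The paper does \emph{not} claim this parity; it explicitly says the exact value could probably be computed but arranges the construction so as not to depend on it. Concretely, the paper bifurcates: if the rank is even, it section--sums with $\Sigma_{h+1}\times\Sigma_9$ exactly as you do; if the rank is odd, it first section--sums with an auxiliary torus bundle $Q$ over $\Sigma_9$ (built from the Kodaira--Thurston manifold fiber--summed with $T^2\times\Sigma_8$, so that $b_1(Q)=17$ is odd) to flip the parity, and only then stabilizes with a product. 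Either way the resulting fiber has genus $h+4$, the coinvariant rank is $2g-2s_0$ for some integer $s_0\in[1,3]$, and the rest proceeds as in your write--up.

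So either do the homological--monodromy computation for $Z$ and record the answer, or adopt the paper's parity--correction trick; as it stands, your proof is conditional on an unverified claim.
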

\begin{proof} We start by producing the example with $b = 9$. Consider the aforementioned surface bundle $\varphi \colon Z \to \Sigma_9$; as the fibers have genus $3$, we must have $\textit{rk}_{\Z}H_1(\Sigma_3;\Z)_{\Pi_9}  \in [0,5]$. (It is probably not too difficult to actually determine the exact value -- which must be strictly smaller than the maximal possible value $\textit{rk}_{\Z}H_1(\Sigma_3;\Z) = 6$, otherwise the monodromy would be contained in the Torelli group -- but our construction is independent of the outcome of such calculation; as this construction can be applied to surface bundles other than $Z$, we will keep the discussion general.) 

At this point, depending of the parity of this rank, we proceed as follows:
\begin{itemize} 
\item if the rank happens to be even, we will perform  a \textit{section sum} of $Z$ and the product bundle $\Sigma_{h+1} \times \Sigma_9$ along sections of self--intersection $0$, i.e. a normal connected sum in which the boundaries of fibered tubular neighborhoods of the sections are identified with a base--preserving diffeomorphism, see \cite{BM13}; 
(Here and in what follows, the normal connected sum operation is not determined uniquely by the data above, but also by the isotopy class of the gluing map; all results we state, however, apply irrespective of those choices); 
\item if the rank happens to be odd, we will first perform a section sum of $Z$ and a standard building block $Q$ where $Q$ can be defined as the torus bundle over a surface of genus $9$ obtained by fiber summing the Kodaira--Thurston manifold -- a torus bundle over a torus -- with the product bundle $T^2 \times \Sigma_8$. (Note that $b_1(Q) = b_1(\Sigma_8) + 1$.) Next, we will proceed with a further section sum  with $\Sigma_{h} \times \Sigma_9$.
\end{itemize}

Either way, the resulting manifold will be a surface bundle $Z_{g,9}$ with fiber of genus $g := h+4$ and even first Betti number;  the monodromy representation of section sums is elucidated in \cite[Section 2.2]{BM13} and a little bookkeeping for the constructions above shows that in all cases $\textit{rk}_{\Z}H_1(\Sigma_g;\Z)_{\Pi_9} = 2g - 2s_0$ for some $s_0 \in [1,3]$. Moreover, by Novikov additivity (and by the choice of the building block using in the stabilizations, all having vanishing signature) the bundle $Z_{g,9}$ has signature $\sigma(Z_{g,9}) = \sigma(Z) > 0$. We can then increase the genus of the base by fiber sum with $\Sigma_{g} \times \Sigma_{c}$ to get a surface bundle $Z_{g,b}$ with base a surface of any genus $b \geq 9$. Again, Novikov additivity and a straightforward Mayer--Vietoris argument show that the signature and the coinvariant homology keep the properties discussed above.

As $g \geq 4$ is a free parameter in the construction above, we can make it large as wanted, in particular larger than $1+ 6s_0$, in which case its fundamental group is not the fundamental group of a Kodaira fibration by Proposition \ref{proposition:xi}. \end{proof}

We can make a minor variation to the construction above to illustrate another phenomenon, which shows that at times we can gain insight on the (non)existence of the structure of Kodaira fibration by reiterate application of Proposition \ref{proposition:xi} to finite covers.

\begin{proposition} \label{prop:reit} There exist a surface bundle $W_{g+1,b}$ with fiber of genus $g+1$ over a surface of genus $b \geq 9$ which satisfies the constraints of Proposition \ref{proposition:xi}, but admits an unramified cover $S_{g+1,5b-6}$ which fails them. Its fundamental group cannot occur as fundamental group of a Kodaira fibration. \end{proposition}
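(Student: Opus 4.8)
The plan is to follow the template of the proof of Proposition~\ref{prop:essezero}, but to use as one of the building blocks a surface bundle whose \emph{homological} monodromy factors through a finite cyclic group. Such a block is invisible to the signature --- its signature, computed by Meyer's formula from a homological monodromy with finite image, must vanish --- so it will not disturb the positive signature that $Z$ contributes to a section sum; on the other hand its coinvariant homology has small rank, equal to the first Betti number of the base of the cyclic cover of its fibre attached to that monodromy, whereas after pulling the bundle back along that cover its homological monodromy becomes trivial and its coinvariant contribution jumps to the full rank of $H_{1}$ of the fibre. Calibrating the numerical data so that this jump straddles Xiao's bound will produce the asserted pair $W_{g+1,b}$, $S_{g+1,5b-6}$.

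First I would build the finite--monodromy block. Fix a free $\Z/5$--action on a closed surface $\Sigma_{\gamma}$ with quotient of genus $\gamma_{0}$, so $\gamma=1+5(\gamma_{0}-1)$ and (the action being free) the $\Z/5$--coinvariants of $H_{1}(\Sigma_{\gamma};\Z)$ have rank $2\gamma_{0}$; realise the action as an embedding $\Z/5\hookrightarrow\Gamma_{\gamma}$, pick an epimorphism $\psi\colon\Pi_{9}\twoheadrightarrow\Z/5$, and let $V\to\Sigma_{9}$ be the surface bundle with fibre $\Sigma_{\gamma}$ and monodromy $\Pi_{9}\xrightarrow{\psi}\Z/5\hookrightarrow\Gamma_{\gamma}$. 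By construction $V$ is the quotient of the product $\widehat{\Sigma}_{9}\times\Sigma_{\gamma}$ (with $\widehat{\Sigma}_{9}\to\Sigma_{9}$ the connected cover attached to $\psi$) by a free diagonal $\Z/5$--action, hence is covered $5$--fold by a product of surfaces and so has $\sigma(V)=0$, while $\operatorname{rk}_{\Z}H_{1}(\Sigma_{\gamma};\Z)_{\Pi_{9}}=2\gamma_{0}$. Now section--sum $Z$ with $V$ along sections of self--intersection $0$ --- inserting one copy of the block $Q$ if the parity of $\operatorname{rk}_{\Z}H_{1}(\Sigma_{3};\Z)_{\Pi_{9}}$ requires it --- and then fiber--sum with products $\Sigma_{g+1}\times\Sigma_{c}$ to raise the base genus to an arbitrary $b\ge 9$. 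Just as in Proposition~\ref{prop:essezero}, Novikov additivity, a Mayer--Vietoris computation of the coinvariant homology, and the description of the monodromy of a section sum in \cite[Section~2.2]{BM13} (the twist about the separating curve being in the Torelli group, the homological monodromy stays block--diagonal) produce a surface bundle $W_{g+1,b}$ of strictly positive signature $\sigma(Z)$ and even first Betti number, with $\operatorname{rk}_{\Z}H_{1}(\Sigma_{g+1};\Z)_{\Pi_{b}}=2(g+1)-2s$ for an $s$ satisfying $g+1\le 1+6s$; so $W_{g+1,b}$ satisfies the constraint of Proposition~\ref{proposition:xi}.

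Next I would pass to the cover. The monodromy of $W_{g+1,b}$ still carries the epimorphism $\psi\colon\Pi_{b}\twoheadrightarrow\Z/5$ on the $V$--summand (it survives the fiber sums, the boundary monodromies glued there being trivial); let $S_{g+1,5b-6}$ be the pull--back of $W_{g+1,b}$ along the corresponding finite unramified cover of the base. This is again a surface bundle of fibre genus $g+1$, over which the $V$--summand of the monodromy is now trivial, so its coinvariant contribution jumps from $2\gamma_{0}$ to $2\gamma$; since only the $Z$--summand then feeds the defect $s'$ in $\operatorname{rk}_{\Z}H_{1}(\Sigma_{g+1};\Z)_{\Pi_{5b-6}}=2(g+1)-2s'$, the quantity $s'$ stays bounded independently of $\gamma$. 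By Novikov additivity and the multiplicativity of the signature in finite covers, $\sigma(S_{g+1,5b-6})=5\,\sigma(Z)>0$, whence its homological monodromy is non--trivial. Choosing $\gamma_{0}$ --- hence $\gamma$, hence $g$ --- large enough therefore forces $g+1>1+6s'$, so that $S_{g+1,5b-6}$ violates the constraint of Proposition~\ref{proposition:xi}.

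Finally, suppose $\pi:=\pi_{1}(W_{g+1,b})$ were the fundamental group of a Kodaira fibration $X$. As $X$ and $W_{g+1,b}$ are aspherical they are homotopy equivalent, so the cover $S_{g+1,5b-6}\to W_{g+1,b}$ corresponds to a finite--index subgroup of $\pi_{1}(X)$ and hence to a finite unramified cover $\widetilde{X}\to X$ with $\pi_{1}(\widetilde{X})\cong\pi_{1}(S_{g+1,5b-6})$; the latter sits in a short exact sequence as in \eqref{eq:ses} with coinvariant homology of rank $2(g+1)-2s'$, and $\sigma(\widetilde{X})=\sigma(S_{g+1,5b-6})>0$ forces the action of the base group on $H_{1}$ of the fibre to be non--trivial. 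Thus $\widetilde{X}$ is an aspherical K\"ahler surface to which the proof of Proposition~\ref{proposition:xi} applies verbatim, giving $g+1\le 1+6s'$ and contradicting the previous step; hence $\pi$ is not the fundamental group of a Kodaira fibration. I expect the delicate part to be the bookkeeping of the two middle steps --- keeping the coinvariant rank of $W_{g+1,b}$ on the admissible side of Xiao's inequality while driving that of $S_{g+1,5b-6}$ across it, all the while preserving positive signature and even first Betti number for both manifolds and tracking the coinvariant homology correctly through the section sums, the fiber sums and the pull--back. The conceptual point to isolate is that a finite--monodromy summand costs nothing for the signature yet has coinvariant homology that collapses to full rank on a suitable finite cover, and that $W_{g+1,b}$ is engineered so that neither the signature, nor the parity of $b_{1}$, nor Proposition~\ref{proposition:xi} itself excludes it.
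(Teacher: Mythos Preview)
Your approach follows the same template as the paper's: augment the bundle $Z_{g,b}$ from Proposition~\ref{prop:essezero} by section-summing with a block whose homological monodromy has finite order, so that the coinvariant rank jumps after passing to a suitable cyclic cover of the base. The paper implements this with a \emph{torus} bundle built from the trefoil knot's period-$6$ monodromy (so the fiber genus increases by exactly $1$, matching the ``$g+1$'' in the statement, and a $6$-fold cover gives base genus $6b-5$), whereas you use a higher-genus block with monodromy factoring through a free $\Z/5$-action. Your numerics do check out, and the freedom in $\gamma_0$ is a pleasant feature.

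There is, however, a genuine gap: you section-sum $Z$ with $V$ ``along sections of self-intersection $0$'', but you never verify that $V$ possesses such a section. Since your $\Z/5$-action on $\Sigma_\gamma$ is free, there is no fixed point to supply an obvious one; the lift of the monodromy to $\mathrm{Diff}(\Sigma_\gamma)$ does guarantee that $1\to\Pi_\gamma\to\pi_1(V)\to\Pi_9\to 1$ splits, hence that a section exists, but nothing forces its normal bundle to be trivial. The paper sidesteps this entirely by building its block as $S^1\times N$ with $N$ the $0$-surgery on the trefoil: the section is then $S^1\times(\text{dual knot})$, whose normal bundle is manifestly trivial because any knot in an orientable $3$-manifold has trivial normal bundle. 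You can repair your argument the same way---realize $V$ as $S^1\times M_\phi$ (with $\phi$ a generator of your $\Z/5$) fiber-summed with a product, and transport the section through the fiber sum---but as written the section-sum step is unjustified. A minor arithmetic point: your $5$-fold unramified cover of $\Sigma_b$ has genus $5b-4$, not $5b-6$ (the ``$5b-6$'' in the statement is itself a typo; the paper's proof produces $6b-5$).
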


\begin{proof} The construction of our examples will build on the construction of Proposition \ref{prop:essezero} Consider the (say right--handed) trefoil knot $T(2,3)$. As detailed e.g. in \cite[Chapter 10]{Ro03} the trefoil is a fibered knot of genus $1$, whose exterior can thus be identified with the mapping torus of an automorphism $\phi: F \to F$ of a surface $F$ on genus $1$ with one boundary component $\partial F$, fixed pointwise by the monodromy $\phi$. With a suitable choice of a basis for $H_1(F;\Z) = \Z^2$, the homological monodromy is given by
\begin{equation} \label{eq:mono} \phi_{*} = \left( \begin{array}{cc}  1 & 1  \\  -1 & 0
\end{array} \right). \end{equation}

Next, perform a $0$--surgery of $S^3$ along the trefoil knot; the resulting closed $3$--manifold $N$ has $b_1(N) = 1$ and admits a torus fibration over $S^1$, whose fiber comes from capping off $F$ with a disk. The $4$--manifold $S^1 \times N$ is then a torus bundle over a torus, and the coinvariant homology of the fiber is trivial, as the homology monodromy representation is induced by that of the trefoil knot and coincides with that in (\ref{eq:mono}). Moreover, the signature of this bundle vanishes. Next, fiber sum with $T^2 \times \Sigma_{b-1}$ to get a torus bundle $M$ with base $\Sigma_{b}$; by the usual arguments, the coinvariant homology of the fiber and the signature vanish. The torus bundle $M$ admits a section of self--intersection $0$ (inherited from the dual of the trefoil knot in $N$). 

Take the surface bundle $Z_{g,b}$, that retains a section of self--intersection $0$, and perform a section sum with $M$. The resulting manifold $W_{g+1,b}$ is a surface bundle with fiber and surface genus as in the indexes, and $\sigma(W_{g+1,b})> 0$. The homology monodromy representation in $SL(2g+2,\Z)$  factors in block diagonal form, with a $2 \times 2$ block as in (\ref{eq:mono}) that comes from the homological monodromy of the trefoil knot. The coinvariant homology of the fiber satisfies $\textit{rk}_{\Z}H_1(\Sigma_{g+1};\Z)_{\Pi_b} = 2g -2s_0$ as the monodromy of the $2 \times 2$ block coming from $M$ increases by $1$  in the genus of the fiber but does not contribute to the coinvariant homology. Choose then $g$ such that $1+6s_0 < g + 1\leq 7+6s_0$; the latter inequality insures that $W_{g+1,b}$ does not violate the inequality of Proposition \ref{proposition:xi}, while the rationale for the first inequality will be apparent momentarily. Recall now that the homological monodromy of the trefoil knot is periodic of period $6$, as can be verified from 
($\ref{eq:mono}$) by computing explicitly $(\phi_{*})^{6} = I$. Then it is not too hard to see the existence of a $6$--fold (unramified) cyclic cover of $W_{g+1,b}$, that we will denote $S_{g+1,6b-5}$, induced by a $6$-fold cyclic cover $\Sigma_{6b-5} \to \Sigma_{b}$ of the base determined  by a normal subgroup $\Pi_{6b-5} \leq \Pi_{b}$. The covering map on $W_{g+1,b}$ is trivial on the fibers, hence the resulting cover $S_{g+1,6b-5}$ is again a surface bundle of fiber $\Sigma_{g+1}$ over the base $\Sigma_{6b-5}$, as recorded in the notation. Its coinvariant homology satisfies $\textit{rk}_{\Z}H_1(\Sigma_{g+1};\Z)_{\Pi_{6b-5}} \geq 2g+2 -2s_0$: there is in fact a gain, in rank, of at least two due to the periodicity of the $2 \times 2$ block in the homological monodromy representation in $SL(2g+2;\Z)$. (Further gain would depend on the monodromy of $Z$, as well as the choice of the gluing data in the section sums above.) Application of Proposition \ref{proposition:xi} entails that this manifold cannot carry the structure of Kodaira fibration, as having $\textit{rk}_{\Z}H_1(\Sigma_{g+1};\Z)_{\Pi_{6b-5}} \geq 2g+2 -2s_0$ would require to have at least $g+1 \leq 1 + 6s_0$, which violates the above choice of $g+1 > 1+6s_0$.  (If $\textit{rk}_{\Z}H_1(\Sigma_{g+1};\Z)_{\Pi_{6b-5}}$ happens to be odd, this would follow as well from simple parity reasons). As the finite cover of a Kodaira fibration must be a Kodaira fibration, $W_{g+1,b}$ does not carry the structure of Kodaira fibration either. Summarizing, reiterated use of Proposition \ref{proposition:xi} for finite covers may unveil further information. \end{proof}

\begin{remarks} \bn \item The surface bundles $Z_{g,b}$ and $W_{g+1,b}$ identified in the proofs of Propositions \ref{prop:essezero} and \ref{prop:reit} admit a symplectic structure by  \cite{Th76}. They fail however to admit a K\"ahler structure. We recap the argument:  if they did admit a K\"ahler structure, as their fundamental group  surjects onto $\Pi_b$ with finitely generated kernel, they would admit a holomorphic fibration over $\Sigma_b$. By the aforementioned results of Kotschick and Hillman, the fibers would be smooth with multiplicity one; as the signature is nonzero, these would be Kodaira fibrations, in violation of Proposition \ref{proposition:xi}. In our understanding, this type of obstruction to the existence of a K\"ahler structure on a symplectic manifold has not been spelled out  in this form before.

\item We conclude this note observing the rather annoying fact that all the discussion above has no bearing on the \textit{a priori} more basic question of whether a group as in ({\ref{eq:ses}}) can occur as fundamental group of some projective (or even K\"ahler) variety (compare Question (5) of \cite{Hi15}). In fact, we have no means to exclude that any of the groups we dismissed above could be the fundamental group of some projective variety hence, by standard arguments, of some algebraic surface $Y$. As the fundamental group of $Y$ would surject onto $\Pi_b$, such surface would admit a fibration with base $\Sigma_b$. By the above considerations, reversing the aforementioned result of \cite{Ko99,Hi00}, we can only infer that the fibration would be necessarily nonsmooth, $Y$ would not be aspherical, and its Euler characteristic would have to exceed $(2g-2)(2b-2)$. \en \end{remarks}
\begin{ack} The author would like to thank \.{I}nan\c{c} Baykur for useful comments. Also, I'm grateful for the anonymous referee for several suggestions that have improved the presentation of the paper. \end{ack}


\end{document}